\documentclass[11pt]{article}
\usepackage[a4paper,margin=0.9in]{geometry}
\usepackage[T1]{fontenc}
\usepackage{lmodern}
\usepackage{amsmath,amsthm,amssymb,mathtools}
\usepackage{microtype}
\usepackage{needspace}
\usepackage{hyperref}
\hypersetup{hidelinks,pdftitle={Critical Restricted Sumsets at the Boundary |A|+|B|=p},
  pdfauthor={Hongjian Li; Yangcheng Li; Pingzhi Yuan},
  pdfsubject={Complete boundary classification, arithmetic structure, and exact enumeration},
  pdfkeywords={restricted sumsets, inverse additive problems, prime fields, affine classification}}

\newtheorem{theorem}{Theorem}[section]
\newtheorem{lemma}[theorem]{Lemma}
\newtheorem{proposition}[theorem]{Proposition}
\newtheorem{corollary}[theorem]{Corollary}
\theoremstyle{definition}
\newtheorem{example}[theorem]{Example}

\newcommand{\Fp}{\mathbb F_p}
\newcommand{\RS}{\mathbin{\widehat{+}}}
\newcommand{\symdiff}{\mathbin{\triangle}}
\numberwithin{equation}{section}
\makeatletter
\renewcommand{\@maketitle}{%
  \begin{center}
    {\LARGE\@title\par}%
    \ifx\@author\@empty\else\vskip1em{\large\@author\par}\fi
    \ifx\@date\@empty\else\vskip0.6em{\@date\par}\fi
  \end{center}\par\vskip0.6em
}
\makeatother

\title{Critical Restricted Sumsets\\at the Boundary $\lvert A\rvert+\lvert B\rvert=p$}
\author{%
  Hongjian Li$^{1,}$\thanks{E-mail: \href{mailto:lhj@gdufs.edu.cn}{\texttt{lhj@gdufs.edu.cn}}.
    Supported by the Project of Guangdong University of Foreign Studies
    (Grant No.~2024RC063).}\quad
  Yangcheng Li$^{2,}$\thanks{Corresponding author.
    E-mail: \href{mailto:liyc@m.scnu.edu.cn}{\texttt{liyc@m.scnu.edu.cn}}.}\quad
  Pingzhi Yuan$^{2,}$\thanks{E-mail: \href{mailto:yuanpz@scnu.edu.cn}{\texttt{yuanpz@scnu.edu.cn}}.
    Supported by NSF of China Nos.~12571003 and~12501006, and by the
    Basic and Applied Basic Research Foundation of Guangdong Province
    No.~2024A1515010589.}\\[0.5em]
  {\small\itshape $^{1}$School of Mathematics and Statistics, Guangdong University of Foreign Studies,}\\
  {\small\itshape Guangzhou 510006, Guangdong, P. R. China}\\
  {\small\itshape $^{2}$School of Mathematical Sciences, South China Normal University,}\\
  {\small\itshape Guangzhou 510631, Guangdong, P. R. China}%
}
\date{}

\begin{document}
\maketitle

\begin{abstract}
Let $p$ be an odd prime, and write
$A\RS B=\{a+b:a\in A,\ b\in B,\ a\ne b\}$.
We classify all pairs $A,B\subseteq\Fp$ satisfying
$|A|+|B|=p$, $|A|>|B|=\ell\ge1$, and $|A\RS B|=p-2$.
After normalizing the two missing sums to $\{0,1\}$, we obtain exactly
$\ell+1$ explicit models. The classification forces $B\subseteq A$ and
yields, for fixed $p,\ell$, exactly $\binom p2(\ell+1)$ ordered pairs and
$1+\lfloor\ell/2\rfloor$ equivalence classes under simultaneous affine
transformations. We determine when either set is an arithmetic progression
and compute exact difference-set cardinalities. For $\ell\ge3$ and
$p\ge4\ell-5$, the cardinality $|B-B|$ determines the equivalence class at
fixed $p,\ell$. The proof uses punctured translates and cyclic component
counting. We also exhibit a critical pair in $\mathbb F_{13}$, with size
gap three and $|A|+|B|=p-2$, that contradicts a proposed inverse statement
below the boundary.
\end{abstract}
\noindent\textbf{Keywords:} restricted sumsets; inverse additive problems; prime fields; affine classification.

\setlength{\abovedisplayskip}{8pt plus 2pt minus 2pt}
\setlength{\belowdisplayskip}{8pt plus 2pt minus 2pt}
\setlength{\abovedisplayshortskip}{3pt plus 1pt}
\setlength{\belowdisplayshortskip}{6pt plus 2pt minus 2pt}

\section{Introduction and the boundary theorem}

Let $p$ be an odd prime. For nonempty subsets $A,B\subseteq\Fp$, write
$A+B=\{a+b:a\in A,\ b\in B\}$ and
$A\RS B=\{a+b:a\in A,\ b\in B,\ a\ne b\}$.
The restriction excludes equal summands as field elements.
For $|A|\ne|B|$, the Alon--Nathanson--Ruzsa theorem
\cite{ANR95,ANR96} gives
\begin{equation}\label{eq:ANR}
 |A\RS B|\ge\min\{p,\,|A|+|B|-2\}.
\end{equation}
A pair satisfying equality is called \emph{critical}.
The corresponding inverse problem asks for the structure of all such pairs.
For comparison, in the symmetric problem K\'arolyi \cite{Karolyi05} proved
that if $X\subseteq\Fp$, $|X|\ge5$, and $2|X|-3<p$, then
$|X\RS X|=2|X|-3$ holds exactly when $X$ is an arithmetic progression.

For distinct set sizes, Liu and Qian \cite[Theorem~1.5]{LQ26} proved that,
if at least one set is a progression, the conditions
\[
 |B|=\ell\ge3,\qquad |A|=k\ge\ell+3,\qquad k+\ell\le p
\]
force a critical pair to consist of progressions with a common difference,
with $B$ an initial or terminal $\ell$-term segment of $A$.
Their Conjecture~1.7 \cite{LQ26} asserts that,
under the same size conditions but without the progression hypothesis,
$|A\RS B|=k+\ell-2$ holds if and only if this endpoint description holds.
Li and Liang \cite[Example~2.1, version~1]{LL26} gave a counterexample in
$\mathbb F_{11}$ at $k+\ell=p$.

Our main problem is to classify every critical pair on the boundary
$|A|+|B|=p$, without assuming that either set is an arithmetic progression
or that $B\subseteq A$. The classification includes $|B|=1,2$ and adjacent
set sizes. For $\ell\ge4$, Liu and Qian \cite[Proposition~3.4]{LQ26}
already obtain the adjacent-size deletion description assuming
$B\subseteq A$; here containment follows from the full classification.

The boundary is distinguished by $|\Fp\setminus A|=|B|$:
each of the two missing restricted sums forces a translate of $-B$,
with at most one point removed, into a complement of the same size.
After normalization, these inclusions force $-B$ to have at most two
cyclic components in the standard directed cycle; equal cardinalities
then determine the complement of $A$. Thus the boundary counterexample
belongs to an explicit family, not an isolated exception.

The elementary proof of Theorem~\ref{thm:boundary} in
Section~\ref{sec:proof} also yields the boundary lower bound independently
of \eqref{eq:ANR}. Section~\ref{sec:structure} derives containment and all
progression types, counts pairs and affine classes, and computes exact
difference-set sizes. In particular, Proposition~\ref{prop:diff} shows
that $|B-B|$ determines the affine class when $\ell\ge3$ and $p\ge4\ell-5$.
Section~\ref{sec:below} gives a size-gap-three counterexample to
\cite[Theorem~3.1, version~1]{LL26} and compares it with known small-gap
constructions, organized by defect. These comparisons do not assert a
classification below the boundary.

Since $p$ is odd, two sizes summing to $p$ differ. Interchanging the sets
if necessary, the boundary problem reduces to
\begin{equation}\label{eq:range}
 |B|=\ell\ge1,\qquad |A|=p-\ell>\ell.
\end{equation}

For integers $0\le a\le b\le p-1$, write
$[a,b]=\{a,a+1,\ldots,b\}\subseteq\Fp$; an interval with $a>b$ is empty.
An \emph{arithmetic progression} is a set $a+r[0,m-1]$ with
$r\in\Fp^\times$ and $1\le m<p$; the progression may wrap around the field.
Having a \emph{common difference} means admitting such representations
with the same $r$, with either orientation allowed.
Two ordered pairs $(A,B)$ and $(A',B')$ are \emph{affinely equivalent} if
$A'=uA+c$ and $B'=uB+c$ for some $u\in\Fp^\times$ and $c\in\Fp$.
Thus the same affine transformation is applied to both sets.
Independent affine changes need not preserve the condition $a\ne b$.

\Needspace{7\baselineskip}
Set
\begin{equation}\label{eq:hd}
 h=\frac{p+1}{2},\qquad d=h-\ell.
\end{equation}
Here $h,d$ are integers; the residue of $h$ is $2^{-1}$ in $\Fp$.
Condition \eqref{eq:range} says precisely that $1\le\ell\le h-1$, so $d\ge1$.
The case $d=1$ is exactly $p=2\ell+1$.

\begin{theorem}\label{thm:boundary}
Assume \eqref{eq:range} and $|A\RS B|=p-2$.
Fix an ordering $(m_0,m_1)$ of the two missing sums and put
$\phi(x)=(m_1-m_0)^{-1}(x-m_0/2)$.
For $0\le v\le\ell$, define
\begin{equation}\label{eq:normalform}
 \begin{aligned}
 S_v&=[0,v-1]\cup[v+d,h-1],\\
 C_v&=[1,v]\cup[v+d,h-1].
 \end{aligned}
\end{equation}
Put $A_v=\Fp\setminus C_v$ and $B_v=-S_v$.
Then $(\phi(A),\phi(B))=(A_v,B_v)$ for a unique
$v\in\{0,1,\ldots,\ell\}$.
Conversely, every pair $(A_v,B_v)$ has missing restricted-sum set exactly
$\{0,1\}$. Thus there are exactly $\ell+1$ distinct pairs with this fixed
missing set.
\end{theorem}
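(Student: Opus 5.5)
The plan is to normalize first, then turn each missing sum into a containment condition, and finally use a counting argument on the cycle $\Fp$ with step $1$.

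\textbf{Normalization.} The map $\phi$ is affine with $\phi(m_0/2)=0$. Since restricted sums of the images satisfy $\phi(a)+\phi(b)=(m_1-m_0)^{-1}(a+b-m_0)$, the missing sums $m_0,m_1$ go to $0,1$. So I may assume the missing set is exactly $\{0,1\}$. Write $C=\Fp\setminus A$, so $|C|=\ell=|B|$.

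\textbf{Punctured translates.} Fix a missing sum $s\in\{0,1\}$. For every $b\in B$, either $s-b\notin A$, or $s-b=b$, i.e.\ $b=s/2$. Hence
\[
 (s-B)\setminus\{s/2\}\subseteq C .
\]
So $s-B$ is contained in $C$ after deleting at most the one point $s/2$. Since $|s-B|=|C|$, two cases arise:
\begin{itemize}
 \item if $s/2\notin B$, then $s-B=C$ exactly;
 \item if $s/2\in B$, then $C=(s-B)\setminus\{s/2\}\cup\{c_s\}$ for a single extra point $c_s$.
\end{itemize}
Comparing the cases $s=0$ and $s=1$, the set $X=-B$ and its translate $X+1$ each agree with $C$ up to at most one swapped point. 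Therefore
\[
 |(X+1)\symdiff X|\le 4 .
\]

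\textbf{Component counting.} On the directed cycle $x\mapsto x+1$, the quantity $|(X+1)\setminus X|$ equals the number of maximal runs (cyclic components) of $X$. Hence $X$ has at most two components. I would then pin down the precise bookkeeping:
\begin{itemize}
 \item If $0\in X$ (i.e.\ $0\in B$), then $0$ is removed from $-B$ and replaced by $c_0$.
 \item If $h=1/2\in B$, i.e.\ $-h=h-1\in X$, the analogous swap occurs for $s=1$, removing $1-h=h$ from $1+X$.
\end{itemize}
Matching the left endpoints, right endpoints, $0$, $h-1$, $c_0$ and $c_1$ should force the following shape:
\begin{itemize}
 \item $X$ consists of a block $[0,v-1]$ and a block ending at $h-1$;
 \item $C$ is obtained from $X$ by shifting the first block by one, giving $[1,v]$, while the block ending at $h-1$ is unchanged.
\end{itemize}
The lengths are then forced to be $v$ and $h-v-d=\ell-v$, which gives the normal form \eqref{eq:normalform}. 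I expect this endpoint matching to be the main obstacle. The degenerate cases need separate care:
\begin{itemize}
 \item $X$ has one component ($v=0$ or $v=\ell$);
 \item the two blocks are cyclically adjacent, as can happen when $d=1$;
 \item $0$ or $h-1$ lies outside $X$.
\end{itemize}
In the last case I must show that this contradicts $|A\RS B|=p-2$. The point is that then $X+1=X$ up to at most two points, which is impossible for a proper nonempty subset unless the runs fit the pattern above. Uniqueness of $v$ is immediate, since the sets $S_v$ are distinct.

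\textbf{Converse.} I would check directly that $0$ and $1$ are missing from $A_v\RS B_v$:
\begin{itemize}
 \item for $s=0$: $-B_v=S_v$ and $S_v\setminus\{0\}\subseteq C_v$;
 \item for $s=1$: $1+S_v=[1,v]\cup[v+d+1,h]$, and removing $h$ leaves a subset of $C_v$.
\end{itemize}
For every other $t$, I need $a\in A_v$ and $b\in B_v$ with $a+b=t$ and $a\ne b$. Since $|t-B_v|=\ell$ and $|C_v|=\ell$, some $t-b$ lies outside $C_v$ unless $t-B_v=C_v$, and that happens only for $t\in\{0,1\}$ by the component structure. This produces $a\in A_v$, and the condition $a\neq b$ fails only when $t=2b$. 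In that case I would use a second point of $(t-B_v)\setminus C_v$, which exists by the same symmetric-difference count (it is at least $2$ when $t\notin\{0,1\}$, by the two-component structure). This gives exactly $\ell+1$ pairs.
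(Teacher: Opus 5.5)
Your framework (normalization, punctured translates, $|X\symdiff(X+1)|\le4$, component counting) is the paper's. However, the decisive step of the necessity proof is only announced (``should force'', ``the main obstacle''), and part of what you say about it is wrong.

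First, split each hole as \emph{ordinary}, $C=s+X$, versus \emph{diagonal}, $s/2\in(s+X)\setminus C$. Splitting by whether $s/2\in B$ allows the trivial swap $c_s=s/2$, so ``$0$ is removed and replaced by $c_0$'' does not give $0\notin C$. The cases then go as follows.
\begin{itemize}
\item Two ordinary holes give $X=X+1$, which is impossible.
\item One ordinary and one diagonal hole give either $C=X$ with $(X+1)\setminus X=\{h\}$, or $C=X+1$ with $X\setminus(X+1)=\{0\}$. In both cases $c(X)=1$, so $X=[d,h-1]$ or $X=[0,\ell-1]$. The situations $0\notin X$ and $h-1\notin X$, which you propose to refute as contradictory, lead exactly here. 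They are not contradictory: they are the endpoint models $v=0$ and $v=\ell$.
\item Two diagonal holes require the idea your proposal is missing: play the two inclusions against each other. We have $0\in X\setminus C$ and $h\in(X+1)\setminus C$. If $-1\in X$, then $0\in(X+1)\setminus\{h\}\subseteq C$, a contradiction. If $h\in X$, then $h\in X\setminus\{0\}\subseteq C$, a contradiction. Hence the component through $0$ starts at $0$ and the component through $h-1$ ends at $h-1$. They are distinct, since otherwise $X\supseteq[0,h-1]$ would have $h>\ell$ elements. With $c(X)\le2$ this gives $X=[0,v-1]\cup[v+d,h-1]$. Finally, $C\supseteq(X\setminus\{0\})\cup\bigl((X+1)\setminus\{h\}\bigr)=[1,v]\cup[v+d,h-1]$, which already has $\ell$ elements, so equality holds.
\end{itemize}
None of these steps appears in your proposal.

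The converse also rests on a false claim: $|(t-B_v)\setminus C_v|\ge2$ does not hold for all $t\notin\{0,1\}$.
\begin{itemize}
\item For $v=\ell$ and $t=2$, one gets $(2+[0,\ell-1])\setminus[1,\ell]=\{\ell+1\}$.
\item For $p=2\ell+1$, $\ell\ge2$, $v=\ell-1$, $t=2$, one gets $\{\ell+2\}$.
\item The endpoint models have only one component, so no ``two-component'' argument covers them.
\end{itemize}
What you actually need is only $(t-B_v)\setminus\{t/2\}\not\subseteq C_v$ for $t\notin\{0,1\}$. The paper verifies this with explicit representations. In the interior, use $t=t+0$ for $h\le t\le p-1$ and $t\equiv(t+h-1)+h$ for $2\le t\le h-1$; at the endpoints, use direct interval sums. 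Alternatively, you could cite \eqref{eq:ANR}: once $0$ and $1$ are missing, $|A_v\RS B_v|\ge p-2$ forces the missing set to be exactly $\{0,1\}$. The paper avoids this route so as to obtain Corollary~\ref{cor:elementary} independently.
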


The endpoint parameters are $v=0$, giving $S_0=C_0=[d,h-1]$, and
$v=\ell$, giving $S_\ell=[0,\ell-1]$, $C_\ell=[1,\ell]$.
For $1\le v\le\ell-1$, the set $S_v$ has two cyclic components; the two
intervals in $C_v$ join when $d=1$. Reversing the ordering of the missing
sums replaces $v$ by $\ell-v$, as shown in Corollary~\ref{cor:count}.
The classification proof works uniformly throughout \eqref{eq:range}.
For $p=11$ and $\ell=3$, the example in
\cite[Example~2.1, version~1]{LL26} is carried to the $v=1$ model by
$x\mapsto7x$. Thus that example belongs to the family classified here.

\section{Proof of the boundary classification}\label{sec:proof}

Throughout this section, assume \eqref{eq:range} and put
$S=-B$, $C=\Fp\setminus A$, so $|S|=|C|=\ell$.
A missing restricted sum is called \emph{ordinary} if it is absent from
$A+B$, and \emph{diagonal} otherwise.

\begin{lemma}\label{lem:puncture}
For $t\in\Fp$,
\begin{equation}\label{eq:puncture}
 t\notin A\RS B
 \quad\Longleftrightarrow\quad
 (t+S)\setminus\{t/2\}\subseteq C.
\end{equation}
If $t$ is missing, exactly one of the following occurs:
either $C=t+S$, in which case the hole is ordinary, or
\begin{equation}\label{eq:diagonal}
 C=\bigl((t+S)\setminus\{t/2\}\bigr)\cup\{x_t\},
 \qquad x_t\notin t+S,
\end{equation}
where $x_t$ is unique and $t/2\in A\cap B$; in this case the hole is diagonal.
\end{lemma}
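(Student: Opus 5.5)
The equivalence \eqref{eq:puncture} comes straight from unwinding the definitions. A sum $t$ lies in $A\RS B$ exactly when there is some $b\in B$ with $t-b\in A$ and $t-b\ne b$. Write $s=-b\in S$. Then $t-b=t+s$, and the condition $t-b\ne b$ becomes $t+s\ne -s$. Since $p$ is odd, this is the same as $s\ne -t/2$, which in turn is the same as $t+s\ne t/2$.

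So $t\notin A\RS B$ holds if and only if every element $t+s$ of $t+S$ with $t+s\ne t/2$ fails to lie in $A$, that is, lies in $C$. This is precisely $(t+S)\setminus\{t/2\}\subseteq C$. The only point to watch here is that the condition "$a\ne b$" picks out exactly the single point $t/2$ of the translate $t+S$; this holds because the map $s\mapsto t+s$ is injective and $a=b$ forces $a=t/2$.

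For the dichotomy we count. Since $|t+S|=|S|=\ell=|C|$, there are two cases.

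\emph{Case $t/2\notin t+S$.} The inclusion reads $t+S\subseteq C$, and equal cardinalities give $C=t+S$.

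\emph{Case $t/2\in t+S$.} The punctured set has $\ell-1$ elements and lies in $C$. Hence $C$ equals it together with exactly one further element $x_t\notin (t+S)\setminus\{t/2\}$. There are two subcases.
\begin{itemize}
\item If $x_t=t/2$, then $C=t+S$ again.
\item Otherwise $x_t\notin t+S$, $x_t$ is unique as the lone element of $C\setminus(t+S)$, and $C\ne t+S$.
\end{itemize}
This is exactly \eqref{eq:diagonal}. The two alternatives are mutually exclusive, since in the second $C\not\subseteq t+S$.

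It remains to match these alternatives with ordinary and diagonal holes.

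\emph{Ordinary alternative.} If $C=t+S$, then every $b\in B$ has $t-b\in C$, so $t\notin A+B$ and the hole is ordinary.

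\emph{Diagonal alternative.} Here $t/2\in t+S$, so $-t/2\in S$ and hence $t/2\in B$. Also $t/2\notin C$, because the only element of $C$ outside the punctured translate is $x_t\ne t/2$. Thus $t/2\in A$. Consequently $t=t/2+t/2\in A+B$, and the hole is diagonal.

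Conversely, an ordinary hole must come from the first alternative, since the second produces $t\in A+B$. So the alternatives coincide with ordinary and diagonal holes.
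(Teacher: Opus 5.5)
Your proof is correct and follows essentially the same route as the paper: you unwind the restriction $a\ne b$ to a single puncture at $t/2$, use $|t+S|=|C|=\ell$ to obtain the two alternatives, and read off $t/2\in A\cap B$ in the diagonal case. The only difference is cosmetic. The paper splits directly on whether $t/2\in (t+S)\setminus C$, while you split on whether $t/2\in t+S$ and then on whether the extra point is $t/2$; the two splits are equivalent, and your explicit exclusivity check is a harmless addition.
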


\begin{proof}
A representation $t=a+b$ with $b\in B$ has $a=t-b$.
Its only forbidden possibility is $a=b=t/2$, proving
\eqref{eq:puncture}. Put $U=t+S$. Since $|U|=|C|=\ell$, there are two
possibilities. If $t/2\notin U\setminus C$, then
$U\setminus\{t/2\}\subseteq C$ together with either $t/2\notin U$ or
$t/2\in C$ gives $U\subseteq C$; equal cardinalities therefore imply
$C=U=t+S$. If instead $t/2\in U\setminus C$, then
$|U\setminus\{t/2\}|=\ell-1$. Hence $C$ contains those $\ell-1$ points
and, because $|C|=\ell$, exactly one further point $x_t$; necessarily
$x_t\notin U$, which gives \eqref{eq:diagonal} and also proves the
uniqueness of $x_t$.

Finally, $t/2\in U=t-B$ is equivalent to $t/2\in B$, while
$t/2\notin C$ means $t/2\in A$. Thus in the first case
$A\cap(t-B)=\varnothing$, so the hole is ordinary. In the second case
$t/2\in A\cap B$, and the only ordinary-sum representation not already
excluded by \eqref{eq:puncture} is the diagonal representation
$t=t/2+t/2$.
\end{proof}

\Needspace{10\baselineskip}
\begin{lemma}\label{lem:component}
Let $\varnothing\ne X\subsetneq\Fp$, and let $c(X)$ be the number of its
maximal intervals in the directed cycle $0,1,\ldots,p-1,0$. Then
\[
 c(X)=|X\setminus(X+1)|=|(X+1)\setminus X|
     =\tfrac12|X\symdiff(X+1)|.
\]
In particular, $c(X)=1$ if and only if $X$ is a single cyclic interval.
\end{lemma}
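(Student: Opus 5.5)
The plan is to decompose $X$ into its maximal cyclic intervals and count how each of the three sets in the statement meets each interval. Since $\varnothing\ne X\subsetneq\Fp$, $X$ is a disjoint union of $c(X)\ge1$ maximal cyclic intervals $I_1,\dots,I_c$ in the directed cycle $0,1,\ldots,p-1,0$. Each interval has the form $I_j=[a_j,b_j]$ read cyclically, so it may wrap around. Maximality means $a_j-1\notin X$ and $b_j+1\notin X$. Because $X\ne\Fp$, no interval is the whole cycle, so these endpoints are well defined and the intervals are separated by gaps.

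The first step is to identify $X\setminus(X+1)$. An element $x\in X$ lies outside $X+1$ exactly when $x-1\notin X$, that is, when $x$ is the initial point $a_j$ of its maximal interval. Each interval has exactly one initial point, so $|X\setminus(X+1)|=c(X)$. Symmetrically, $y\in(X+1)\setminus X$ means $y-1\in X$ and $y\notin X$. Hence $y=b_j+1$ for the terminal point $b_j$ of some interval, and these points are distinct for distinct $j$, giving $|(X+1)\setminus X|=c(X)$. Alternatively, once the first count is known, $|X|=|X+1|$ gives the second equality at once via
\[
|X\setminus(X+1)|=|X|-|X\cap(X+1)|=|(X+1)\setminus X|.
\]

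The symmetric-difference formula then follows because $X\symdiff(X+1)$ is the disjoint union of the two differences, so its size is $2c(X)$. The ``in particular'' clause is the definition: $c(X)=1$ means $X$ is one maximal cyclic interval.

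There is no real obstacle. The only care needed is the cyclic convention: an interval wrapping through $p-1,0$ counts once. This is exactly why the hypothesis $X\ne\Fp$ is required, since otherwise $X\setminus(X+1)=\varnothing$ while $X$ is still a ``single interval''.
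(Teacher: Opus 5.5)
Your proposal is correct and follows essentially the same argument as the paper: you identify $X\setminus(X+1)$ with the initial points and $(X+1)\setminus X$ with the successors of the terminal points of the maximal cyclic intervals, and then use that these two disjoint sets together form $X\symdiff(X+1)$. Your alternative derivation of the middle equality from $|X|=|X+1|$ is also valid.
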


\begin{proof}
For $x\in\Fp$,
\[
 x\in X\setminus(X+1)
 \quad\Longleftrightarrow\quad
 x\in X\ \text{ and }\ x-1\notin X,
\]
so these are exactly the starting points of the maximal cyclic intervals.
Likewise,
\[
 x\in (X+1)\setminus X
 \quad\Longleftrightarrow\quad
 x-1\in X\ \text{ and }\ x\notin X,
\]
so these are exactly the immediate successors of their terminal points.
Each component contributes one point to each set. The two sets are disjoint,
and their union is $X\symdiff(X+1)$, giving the stated equalities.
\end{proof}

\Needspace{10\baselineskip}
\begin{proof}[Proof of Theorem~\ref{thm:boundary}]
\emph{Normalization and necessity.}
For $\phi(x)=ux+c$, $u\ne0$, one has
\begin{equation}\label{eq:affine}
 \phi(A)\RS\phi(B)=u(A\RS B)+2c,
\end{equation}
because $a\ne b$ is equivalent to $\phi(a)\ne\phi(b)$.
If the missing sums are $m_0,m_1$, take
$u=(m_1-m_0)^{-1}$ and $c=-um_0/2$.
The induced map on sums is $t\mapsto(t-m_0)/(m_1-m_0)$, so the
missing set becomes $\{0,1\}$. Replace $A,B$ by their images and retain
$S=-B$, $C=\Fp\setminus A$ for the normalized sets. The necessity argument
that follows uses only that $0$ and $1$ are missing, not that these are the
only missing sums.

By Lemma~\ref{lem:puncture}, the two holes give the alternatives
\begin{align*}
 (O_0):\;&C=S,
 & (D_0):\;&S\setminus C=\{0\},\quad C\setminus S=\{\alpha\};\\
 (O_1):\;&C=S+1,
 & (D_1):\;&(S+1)\setminus C=\{h\},\quad C\setminus(S+1)=\{\beta\}.
\end{align*}
The case $(O_0,O_1)$ would give $S=S+1$, impossible for a nonempty proper
subset of the prime cyclic group.

If $(D_0,O_1)$ holds, then $C=S+1$ and
$S\setminus(S+1)=\{0\}$. Hence Lemma~\ref{lem:component} gives
$c(S)=1$, and the unique cyclic component starts at $0$. Since its size is
$\ell$, it is exactly $[0,\ell-1]$; therefore $S=[0,\ell-1]$ and
$C=[1,\ell]$, giving \eqref{eq:normalform} with $v=\ell$.
If $(O_0,D_1)$ holds, then $C=S$ and $(S+1)\setminus S=\{h\}$.
Again $c(S)=1$, now with unique successor $h$ to the terminal point of the
component. Thus the component ends at $h-1$, and its size $\ell$ gives
$S=C=[h-\ell,h-1]$, which is \eqref{eq:normalform} with $v=0$.

It remains to consider $(D_0,D_1)$. The two diagonal conditions imply
\begin{equation}\label{eq:endpoints}
 0,h-1\in S,\qquad -1,h\notin S.
\end{equation}
Indeed, $D_0$ gives $0\in S$ and $0\notin C$; $D_1$ gives
$h-1\in S$ and $h\notin C$. If $h\in S$, then $D_0$ would put $h$ in $C$.
If $-1\in S$, then $D_1$ would put $0$ in $C$.
Furthermore,
\[
 |S\symdiff(S+1)|
 \le |S\symdiff C|+|C\symdiff(S+1)|=4,
\]
so Lemma~\ref{lem:component} gives at most two cyclic components.
Let $K_0$ and $K_h$ be the components containing $0$ and $h-1$,
respectively. They are distinct: if one component contained both points,
then, because $-1,h\notin S$, it would have to be the entire directed
interval $[0,h-1]$, whose size is $h>\ell$. Thus these are the only two
components of $S$. Since $-1,h\notin S$, both are contained in $[0,h-1]$;
moreover, $K_0$ starts at $0$ and $K_h$ ends at $h-1$. Consequently
\[
 S=[0,v-1]\cup[w,h-1]
\]
for some integers $0<v<w<h$.
Counting gives $v+h-w=\ell$, so $w=v+d$.
The two components are nonempty exactly when $1\le v\le\ell-1$;
in particular, this case cannot occur for $\ell=1$.

The two diagonal alternatives now determine $C$. From $(D_0)$ and
Lemma~\ref{lem:puncture} with $t=0$ we have
$S\setminus\{0\}\subseteq C$, while $(D_1)$ with $t=1$ and $1/2=h$ gives
$(S+1)\setminus\{h\}\subseteq C$. Thus both
\[
 \begin{aligned}
 U_0&=S\setminus\{0\}=[1,v-1]\cup[w,h-1],\\
 U_1&=(S+1)\setminus\{h\}=[1,v]\cup[w+1,h-1]
 \end{aligned}
\]
lie in $C$. Their union is $[1,v]\cup[w,h-1]$, whose size is
$v+h-w=\ell=|C|$. Since $U_0\cup U_1\subseteq C$ and the two sets have the
same cardinality, equality follows. This proves \eqref{eq:normalform} for
$1\le v\le\ell-1$. The calculation includes $v=1$ and $v=\ell-1$ under
the empty-interval convention; when $d=1$, the two intervals in $C$ are
adjacent but disjoint.

\emph{Sufficiency and uniqueness.}
In every displayed model, the two block sizes are $v$ and $\ell-v$.
Since $d\ge1$, the blocks do not overlap, so $|S_v|=|C_v|=\ell$.
Moreover,
\[
 S_v\setminus C_v\subseteq\{0\},\qquad
 (S_v+1)\setminus C_v\subseteq\{h\}.
\]
Thus $0,1\notin A_v\RS B_v$ by \eqref{eq:puncture}.
We verify directly that every other residue occurs. If $0<v<\ell$, then
$0,h\in B_v$ and $[h,p-1]\subseteq A_v$. Using standard integer
representatives, the admissible representations are
\[
 t=t+0\quad(h\le t\le p-1),\qquad
 t=(t+h-1)+h\pmod p\quad(2\le t\le h-1).
\]
In the second expression, $t+h-1\in[h+1,p-1]$ and $2h-1=p$, so the
summands are unequal and the sum is indeed $t$.

At the endpoints, put $k=p-\ell$, $I=[0,k-1]$, and $J=[0,\ell-1]$.
Reading the complements in \eqref{eq:normalform} gives
\begin{equation}\label{eq:endpoint-pairs}
 (A_0,B_0)=(h+I,h+J),\qquad (A_\ell,B_\ell)=(-I,-J).
\end{equation}
We have $I\RS J=[1,p-2]$: for $1\le t\le k-1$ use $t=t+0$;
for $k\le t\le p-2$ use $t=(k-1)+(t-k+1)$, where
$1\le t-k+1\le\ell-1<k-1$. The second range is empty if $\ell=1$.
The only possible sum $0$ is diagonal, and no integer sum exceeds $p-2$.
Equation~\eqref{eq:affine} now gives missing set $\{0,1\}$ for both pairs
in \eqref{eq:endpoint-pairs}.
Finally, $v$ is the first integer in $0,1,\ldots,\ell$ absent from $S_v$,
because $d\ge1$. This proves uniqueness in the fixed coordinates and
distinguishes all $\ell+1$ models.
\end{proof}

\begin{corollary}\label{cor:elementary}
Under \eqref{eq:range}, $|A\RS B|\ge p-2$.
\end{corollary}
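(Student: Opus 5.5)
We argue by contradiction: assume \eqref{eq:range} and that at least three residues are missing from $A\RS B$. We then show that already two missing sums plus a third lead to a contradiction. The key observation is recorded in the proof of Theorem~\ref{thm:boundary}: the necessity argument uses only that two prescribed sums are missing, not that they are the only missing sums.

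Choose any two distinct missing residues $m_0,m_1$ and apply the normalizing map $\phi$ from the theorem. By \eqref{eq:affine}, $\phi$ carries the missing set bijectively onto the missing set of the image pair, with $m_0,m_1\mapsto0,1$, so the number of missing sums is unchanged. Running the necessity part of the proof of Theorem~\ref{thm:boundary} (Lemma~\ref{lem:puncture}, then the case analysis $(O_0,O_1)$, $(D_0,O_1)$, $(O_0,D_1)$, $(D_0,D_1)$, with Lemma~\ref{lem:component} bounding the number of components) shows that $(\phi(A),\phi(B))=(A_v,B_v)$ for some $0\le v\le\ell$.

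The sufficiency part of the same proof then applies. It checks directly that every residue other than $0$ and $1$ lies in $A_v\RS B_v$: for $0<v<\ell$ via the representations $t=t+0$ and $t=(t+h-1)+h$, and at the endpoints via \eqref{eq:endpoint-pairs} and $I\RS J=[1,p-2]$. Hence the normalized pair has exactly two missing sums, contradicting the assumption of at least three. Therefore at most two residues are missing, and $|A\RS B|\ge p-2$.

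The main point to check is that nothing in the necessity argument silently used the hypothesis $|A\RS B|=p-2$ beyond the absence of $0$ and $1$. The paper says this explicitly. Lemma~\ref{lem:puncture} needs only $|S|=|C|=\ell$, which follows from $|A|+|B|=p$. The component counting needs only the two inclusions. So the argument goes through.
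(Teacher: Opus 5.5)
Your proposal is correct and follows essentially the same route as the paper: normalize two of the (at least three) missing sums to $\{0,1\}$, invoke the necessity argument (which uses only that $0$ and $1$ are missing) to land in some model $(A_v,B_v)$, and then use the sufficiency computation showing that this model's missing set is exactly $\{0,1\}$. The only cosmetic difference is that the paper tracks the image $\lambda=(m_2-m_0)/(m_1-m_0)\notin\{0,1\}$ of a specific third missing residue, whereas you argue that the bijection on sums preserves the number of missing residues.
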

\begin{proof}
Suppose, toward a contradiction, that three distinct residues
$m_0,m_1,m_2$ are missing. Normalize $m_0,m_1$ as in the proof of
Theorem~\ref{thm:boundary}. The induced map on sums is the bijection
\[
 t\longmapsto \frac{t-m_0}{m_1-m_0},
\]
so the image
\[
 \lambda=\frac{m_2-m_0}{m_1-m_0}
\]
is still missing and, by injectivity, satisfies $\lambda\notin\{0,1\}$.
The necessity argument above uses only that $0$ and $1$ are missing, so it
forces one of the normalized models. The sufficiency part then says that
this model has missing set exactly $\{0,1\}$, contradicting the missing
residue $\lambda$. Hence at most two residues are missing, and therefore
$|A\RS B|\ge p-2$. This argument does not use \eqref{eq:ANR}.
\end{proof}

\Needspace{10\baselineskip}
\section{Arithmetic structure and exact enumeration}\label{sec:structure}

We first record the integer lifting observation used to distinguish
progressions from the interior models.

\begin{lemma}\label{lem:lifting}
Let $X\subseteq[0,(p-1)/2]$ be an arithmetic progression in $\Fp$,
with $|X|\ge3$. Then its standard integer representatives form an
arithmetic progression in $\mathbb Z$. If $X$ contains two consecutive
integers, then $X$ is an integer interval.
\end{lemma}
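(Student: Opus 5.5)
The plan is to write $X=\{a+jr: 0\le j\le m-1\}$ in $\Fp$ with $m=|X|\ge3$, and to choose the representative $r\in\{1,\dots,p-1\}$ of the difference cleverly. Since $X$ lies in $[0,(p-1)/2]$, every difference of two elements of $X$, read as an integer, lies in $[-(p-1)/2,(p-1)/2]$. I will take $r$ to be the integer representative in $(-p/2,p/2)$ of the common difference, and $r\neq0$. Replacing $r$ by $-r$ and reversing the order changes nothing, so I may assume $1\le r\le (p-1)/2$.

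The key step is to show that, with $x_j$ the standard representative of $a+jr$, one has $x_{j+1}=x_j+r$ in $\mathbb Z$ for every $j$, with no wraparound. Modulo $p$, $x_{j+1}-x_j\equiv r$, and $x_{j+1}-x_j\in[-(p-1)/2,(p-1)/2]$. The only integers in this window congruent to $r$ are $r$ itself, since $r-p<-(p-1)/2$ when $r\le (p-1)/2$. Hence $x_{j+1}=x_j+r$ exactly. Therefore $x_j=x_0+jr$ in $\mathbb Z$, an integer arithmetic progression.

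This argument does not use $m\ge3$; the hypothesis is presumably only there so that the difference is well defined up to sign. I will still check that the chosen $r$ is indeed the difference used: any representation $a+r[0,m-1]$ with $r$ possibly large can be rewritten using $\pm r$ reduced to the symmetric window. Both $r$ and $-r$ give the same set with reversed orientation, so this is harmless. I expect this bookkeeping of orientation to be the only subtle point.

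For the second claim, suppose $X$ contains $x,x+1$. These are elements of an integer progression $x_0+r[0,m-1]$ with $r\ge1$, so their difference $1$ is a multiple $kr$ with $k\ge1$. This forces $r=1$. Then $X=[x_0,x_0+m-1]$ is an integer interval, which finishes the proof.
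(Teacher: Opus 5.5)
Your proof is correct and follows essentially the paper's argument: a congruence between differences of elements of $[0,(p-1)/2]$ is lifted to an integer equality because the relevant window is too short to contain two distinct integers congruent modulo $p$, and the last step $(j-i)r=1\Rightarrow r=1$ is the same. The paper applies this to the second differences $x_{i+2}-2x_{i+1}+x_i$ (congruent to $0$, of absolute value at most $p-1$), which needs no choice of representative for the common difference and is where $|X|\ge3$ enters; your normalization $1\le r\le(p-1)/2$ lets first differences suffice, so you correctly observe that the size hypothesis is not needed.
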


\begin{proof}
List the standard integer representatives $x_0,\ldots,x_{m-1}$ in
progression order. Then, for $0\le i\le m-3$,
\[
 x_{i+2}-2x_{i+1}+x_i\equiv0\pmod p,\qquad
 |x_{i+2}-2x_{i+1}+x_i|\le p-1.
\]
The only multiple of $p$ in this range is $0$, so every second difference
vanishes over the integers. Thus the representatives form an integer
progression with a nonzero integer difference $R$.
If $x_j-x_i=\pm1$, then $(j-i)R=\pm1$. Since $j-i$ and $R$ are nonzero
integers, $|j-i|=|R|=1$. Hence $R=\pm1$ and $X$ is an integer interval.
\end{proof}

\begin{proposition}\label{prop:arithmetic}
Under \eqref{eq:range}, the models of Theorem~\ref{thm:boundary} have the
following properties.
\begin{enumerate}
\renewcommand{\labelenumi}{(\roman{enumi})}
\setlength{\itemsep}{2pt}
\item Every model satisfies $B_v\subseteq A_v$, and
\[
 |A_v+B_v|=
 \begin{cases}
 p-1,&v=0,\ell,\\
 p,&1\le v\le\ell-1.
 \end{cases}
\]
The endpoints are precisely the models admitting progression representations
with a common nonzero difference; then $B_v$ is an initial or terminal
segment of $A_v$.
\item When $p=2\ell+1$, the normal forms simplify to
\begin{equation}\label{eq:deletion}
 C_v=[1,\ell],\qquad A_v=-[0,\ell],\qquad
 B_v=A_v\setminus\{-v\}.
\end{equation}
Consequently, the boundary critical pairs with $|A|=|B|+1$ are exactly a progression
of length $\ell+1$ together with the set obtained by deleting any one term.
\item For $\ell=1$ there are no interior parameters. For $\ell=2$, the sole
interior model has both sets progressions, but with no common difference.
For $\ell\ge3$ and $1\le v\le\ell-1$, the set $B_v$ is not a progression,
and $A_v$ is a progression if and only if $p=2\ell+1$.
\end{enumerate}
\end{proposition}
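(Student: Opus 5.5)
The plan is to read everything off the explicit normal forms of Theorem~\ref{thm:boundary}, with $S_v=-B_v$ and $C_v=\Fp\setminus A_v$. Two elementary facts will be used throughout. First, a set $X$ is a progression if and only if its complement is one, with the same difference: if $X=a+r[0,m-1]$, then $\Fp\setminus X=(a+mr)+r[0,p-m-1]$. Second, affine maps preserve progressions, common differences and deletions. It also helps to note that $S_v,C_v\subseteq[0,h-1]=[0,(p-1)/2]$, so Lemma~\ref{lem:lifting} applies to them.

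For (i), containment $B_v\subseteq A_v$ means $-S_v\cap C_v=\varnothing$. Since $S_v\subseteq[0,h-1]$, we get $-S_v\subseteq\{0\}\cup[h,p-1]$, whereas $C_v\subseteq[1,h-1]$, so the two sets are disjoint. For $|A_v+B_v|$, note that $A_v+B_v$ already contains every residue except possibly $0$ and $1$. The missing sum $0$ or $1$ lies in $A_v+B_v$ exactly when that hole is diagonal. Lemma~\ref{lem:puncture} says hole $t$ is ordinary if and only if $C=t+S$.
\begin{itemize}
\item For $v=0$: $C_0=S_0$ and $S_0+1\ne C_0$, so only $1$ is diagonal.
\item For $v=\ell$: $C_\ell=S_\ell+1\ne S_\ell$, so only $0$ is diagonal.
\item For interior $v$: the proof of the theorem places us in case $(D_0,D_1)$. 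Directly, $0\in S_v\setminus C_v$ and $h\in(S_v+1)\setminus C_v$, so both holes are diagonal.
\end{itemize}
This gives $p-1$, $p-1$ and $p$ respectively.

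The endpoint models are $(h+I,h+J)$ and $(-I,-J)$ by \eqref{eq:endpoint-pairs}. These are intervals with difference $1$, and $B$ is an initial or terminal segment of $A$. Conversely, if $A=a+r[0,k-1]$ and $B=b+r[0,\ell-1]$ share the difference $r$, then $A+B=(a+b)+r[0,k+\ell-2]$ has size $p-1$. This rules out every interior $v$, where $|A_v+B_v|=p$.

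For (ii), put $d=1$ and $h=\ell+1$. Then $C_v=[1,v]\cup[v+1,\ell]=[1,\ell]$, so $A_v=\{0\}\cup[\ell+1,p-1]=-[0,\ell]$. Also $S_v=[0,\ell]\setminus\{v\}$, hence $B_v=A_v\setminus\{-v\}$. For the consequence there are two directions.
\begin{itemize}
\item Any critical pair with $|A|=|B|+1$ is carried by $\phi^{-1}$ from such a model, and affine maps preserve "progression minus one term".
\item Conversely, given a progression $a+r[0,\ell]$ with one term deleted, the affine map $x\mapsto -(x-a)/r$ sends it to $-[0,\ell]$ with one point deleted, which is some $(A_v,B_v)$. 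By Theorem~\ref{thm:boundary} and \eqref{eq:affine}, the pair is critical.
\end{itemize}

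For (iii):
\begin{itemize}
\item If $\ell=1$, the interior range $1\le v\le\ell-1$ is empty.
\item If $\ell=2$, the only interior model is $v=1$, with $d=h-2$. Then $S_1=\{0,h-1\}$ and $C_1=\{1,h-1\}$. Two-element sets are progressions, and $A_1$ is the complement of a progression, hence a progression. Part (i) excludes a common difference.
\item If $\ell\ge3$ and $1\le v\le\ell-1$, the blocks have sizes $v$ and $\ell-v$, and one of them has size at least $2$. So $S_v$ has at least $3$ elements and contains two consecutive integers. If $S_v$ were a progression, Lemma~\ref{lem:lifting} would make it an integer interval. But it contains $0$ and $h-1$, so it would have to be all of $[0,h-1]$, which has size $h>\ell$. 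Hence neither $S_v$ nor $B_v=-S_v$ is a progression.
\item For $A_v$, it suffices to test $C_v$. It also has a block of length at least $2$ inside $[0,(p-1)/2]$. When $d\ge2$ it is not an integer interval, so it is not a progression by Lemma~\ref{lem:lifting}. When $d=1$, that is $p=2\ell+1$, it equals $[1,\ell]$, and $A_v$ is a progression.
\end{itemize}

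The only step needing some care is the "common difference" exclusion in (i). I handle it with the sumset-size argument above rather than case analysis; one must check that $k+\ell-1=p-1<p$, so that the progression $A+B$ does not wrap onto all of $\Fp$.
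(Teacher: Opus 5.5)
Your proof is correct. For most of the proposition it follows the paper. You use the same disjointness argument for $B_v\subseteq A_v$ and the same specialization $d=1$ in (ii). In (iii) you use the same appeal to Lemma~\ref{lem:lifting}. Your test for which holes are diagonal also matches the paper's: you check whether $C_v=t+S_v$ via Lemma~\ref{lem:puncture}, while the paper checks whether $t/2\in A_v\cap B_v$, and these are equivalent.

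The genuine difference is how you rule out a common difference for the interior models. The paper settles this only at the end, through (iii). For $\ell\ge3$ it shows that $B_v$ is not a progression at all. For $\ell=2$ it scales by $2$ and compares the difference $\pm1$ of $\{0,1\}$ with the difference $\pm3$ of $\{2,-1\}$. You instead note that progressions with a common difference $r$ satisfy $A+B=(a+b)+r[0,p-2]$, which has size $p-1$. That contradicts $|A_v+B_v|=p$ in the interior.

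Your route has three advantages:
\begin{itemize}
\item it is uniform in $\ell$ and removes the special $\ell=2$ computation;
\item it makes (i) independent of (iii);
\item it ties the endpoint/interior dichotomy directly to the $p-1$ versus $p$ size of the ordinary sumset, which is a natural invariant here.
\end{itemize}
The paper's ordering lets the non-progression result of (iii), which is needed anyway, do double duty. For $\ell\ge3$ it also gives the stronger statement that $B_v$ has no progression representation whatsoever.

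A smaller difference is in (ii): you prove both directions of the ``exactly'' claim explicitly, while the paper writes out only the converse and leaves the forward direction to \eqref{eq:deletion} and Theorem~\ref{thm:boundary}.
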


\begin{proof}
Since $C_v\subseteq[1,h-1]$ and $B_v\subseteq\{0\}\cup[h,p-1]$,
the sets $B_v,C_v$ are disjoint, giving $B_v\subseteq A_v$.
For the ordinary sumset, recall that Theorem~\ref{thm:boundary} gives
\[
 \Fp\setminus(A_v\RS B_v)=\{0,1\}.
\]
If a restricted-missing residue $t$ nevertheless belongs to $A_v+B_v$,
then every ordinary representation must be diagonal; equivalently,
$t/2\in A_v\cap B_v$. Conversely, such a diagonal point gives an ordinary
representation. Now $0\notin C_v$, so $0\in A_v$, and
$0\in B_v$ exactly when $0\in S_v$, namely when $v>0$. Hence $0$ belongs
to $A_v+B_v$ exactly for $v>0$. Likewise $h=1/2$ never lies in $C_v$, so
$h\in A_v$, while
\[
 h\in B_v\quad\Longleftrightarrow\quad -h=h-1\in S_v
 \quad\Longleftrightarrow\quad v<\ell.
\]
Thus $1=h+h$ belongs to $A_v+B_v$ exactly for $v<\ell$. Every other
residue is already in the restricted sumset. Therefore the ordinary missing
set is $\{0\}$ at $v=0$, $\{1\}$ at $v=\ell$, and empty in the interior,
which proves the ordinary-sum formula.

The endpoint representations \eqref{eq:endpoint-pairs} give the asserted
common difference and segment description. Negation preserves progressions,
as does complementation: if $X=a+r[0,m-1]$, then
$\Fp\setminus X=a+r[m,p-1]$.

For (ii), if $p=2\ell+1$, then $h=\ell+1$ and $d=1$, so
\eqref{eq:normalform} gives $C_v=[1,\ell]$ and
$S_v=[0,\ell]\setminus\{v\}$. This proves \eqref{eq:deletion} for every
$\ell\ge1$. Conversely, write any length-$(\ell+1)$ progression as
$P=a+r[0,\ell]$. If its $v$th term is deleted, the affine map
$x\mapsto a-rx$ sends $(A_v,B_v)$ in \eqref{eq:deletion} to
$(P,P\setminus\{a+rv\})$. Criticality follows from
Theorem~\ref{thm:boundary} and \eqref{eq:affine}.

For (iii), the assertion for $\ell=1$ is immediate. More explicitly, if
$B=\{b\}$ and $|A|=p-1$, then $A\RS B=(A\setminus\{b\})+b$,
so criticality is exactly $b\in A$.
For $\ell=2$, the sets $B_v,C_v$ have two elements; hence $A_v,B_v$ are
progressions. At $v=1$, multiplication by $2$ gives
$2\cdot B_1=\{0,1\}$ and $2\cdot C_1=\{2,-1\}$.
A common progression difference for the scaled pair would be $\pm1$.
By complementation, this would also be a difference for $\{2,-1\}$,
whose two elements instead differ by $\pm3$.
Since $3\not\equiv\pm1\pmod p$ for $p\ge5$, no common difference exists.

Now let $\ell\ge3$ and $1\le v\le\ell-1$.
The set $S_v\subseteq[0,h-1]$ has two nonempty blocks separated by a gap
of length $d\ge1$. Their total size is $\ell\ge3$, so at least one block
contains two consecutive integers. Were $S_v$ a progression,
Lemma~\ref{lem:lifting} would force it to be an integer interval,
contradicting the gap. Thus $S_v$, and hence $B_v=-S_v$, is not a progression. When
$d\ge2$, the same argument applies to $C_v$, whose two blocks are separated
by a nonempty gap of length $d-1$; therefore $C_v$ is not a progression,
and neither is its complement $A_v$. When $d=1$, part~(ii) already shows
that $A_v$ is a progression. These interior cases also exclude a common
difference, completing (i) and (iii).
\end{proof}

\Needspace{10\baselineskip}
\begin{corollary}\label{cor:count}
For fixed $p$ and $1\le\ell\le(p-1)/2$, let $N(p,\ell)$ count critical
ordered pairs with $|A|=p-\ell$ and $|B|=\ell$. Then
\begin{equation}\label{eq:count}
 N(p,\ell)=\binom p2(\ell+1).
\end{equation}
Exactly $p(p-1)$ of these pairs admit progression representations with a
common difference; the remaining $\binom p2(\ell-1)$ pairs have the interior
types described in Proposition~\ref{prop:arithmetic}.
There are exactly $1+\lfloor\ell/2\rfloor$ affine equivalence classes,
represented by \eqref{eq:normalform} with $0\le v\le\lfloor\ell/2\rfloor$:
one endpoint class and $\lfloor\ell/2\rfloor$ interior classes.
\end{corollary}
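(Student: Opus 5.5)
The count will come from a bijection. A critical pair determines an ordered pair $(m_0,m_1)$ of distinct missing sums together with a parameter $v$. Fix the unordered missing set $\{m_0,m_1\}$ and one of its orderings. By Theorem~\ref{thm:boundary}, the map $\phi$ attached to that ordering sends the critical pairs with this missing set bijectively onto the $\ell+1$ models $(A_v,B_v)$. Conversely, $\phi^{-1}(A_v,B_v)$ has missing set $\{m_0,m_1\}$ by \eqref{eq:affine}. The number of unordered missing sets is $\binom p2$, and every critical pair has exactly one such set. Hence $N(p,\ell)=\binom p2(\ell+1)$.

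For the progression count, I will use Proposition~\ref{prop:arithmetic}(i), which says that the pairs with a common difference are exactly the endpoint models $v\in\{0,\ell\}$. This property is invariant under affine maps. For $\ell\ge1$ the two endpoints are distinct, so each missing set contributes $2$ such pairs, giving $2\binom p2=p(p-1)$. The remaining $\binom p2(\ell-1)$ pairs are interior.

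For the affine classes, I first compute the effect of reversing the ordering. Reversal composes $\phi$ with the map $\psi(x)=h-x$; indeed $\psi$ induces $t\mapsto 1-t$ on sums and swaps $0$ and $1$. I will then check directly that $\psi(S_v)$ is mapped appropriately. Since $B_v=-S_v$, we have $\psi(B_v)=h+S_v$, so I need $-(h+S_v)=S_{\ell-v}$, that is, $S_{\ell-v}=(h-1)-S_v$ after using $-h=h-1$. Reflecting $[0,v-1]\cup[v+d,h-1]$ by $x\mapsto h-1-x$ gives $[h-v,h-1]\cup[0,h-1-v-d]=[0,\ell-v-1]\cup[(\ell-v)+d,h-1]$, which is $S_{\ell-v}$. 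The sets $C_v$ then follow by the same reflection, or by uniqueness in Theorem~\ref{thm:boundary}. So $\psi(A_v,B_v)=(A_{\ell-v},B_{\ell-v})$.

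It follows that two models $(A_v,B_v)$ and $(A_{v'},B_{v'})$ are affinely equivalent iff $v'\in\{v,\ell-v\}$. Suppose an affine map $g$ sends one model to the other. Then $g$ maps the missing set $\{0,1\}$ onto itself, so the induced map on sums is either the identity or $t\mapsto1-t$. Hence $g$ is the identity or $\psi$, and uniqueness in Theorem~\ref{thm:boundary} gives $v'=v$ or $v'=\ell-v$. Every class therefore meets the models, and the classes correspond to orbits $\{v,\ell-v\}$ in $\{0,\dots,\ell\}$. There are $\lfloor\ell/2\rfloor+1$ such orbits, with representatives $0\le v\le\lfloor\ell/2\rfloor$. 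The orbit $\{0,\ell\}$ is the endpoint class, and the others are interior.

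The main obstacle is the reflection identity for $S_v$, and specifically tracking the signs: $B=-S$ and the relation $-h=h-1$ must be handled carefully.
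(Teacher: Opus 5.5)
Your proposal is correct and follows the paper's proof essentially step for step. The normalization bijection for each unordered missing set gives $\binom p2(\ell+1)$, the two endpoint models give the $p(p-1)$ common-difference pairs, and the only affine maps preserving the missing set $\{0,1\}$ are the identity and $x\mapsto h-x$, which sends $v$ to $\ell-v$; your explicit reflection check for $S_v$, with the uniqueness shortcut for $C_v$, fills in the identity $(h-1-S_v,\,h-C_v)=(S_{\ell-v},C_{\ell-v})$ that the paper states directly.
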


\begin{proof}
For each unordered missing set $M=\{m_0,m_1\}$, fix one ordering and use
\eqref{eq:affine} to normalize it to $\{0,1\}$.
This is a bijection from pairs with missing set $M$ to the $\ell+1$ pairs
of Theorem~\ref{thm:boundary}. The ordering is fixed for each $M$, not
counted as additional data. Distinct missing sets give disjoint collections
of pairs, proving \eqref{eq:count}. The two endpoint models give
$2\binom p2=p(p-1)$ common-difference pairs, and
Proposition~\ref{prop:arithmetic} gives the other types.

Any affine equivalence between normalized models must preserve their
missing set $\{0,1\}$. Its induced map $t\mapsto ut+2c$ is therefore either
the identity or $t\mapsto1-t$, so the map on elements is the identity or
$\rho(x)=h-x$. Under $\rho$, the complement transforms as $C\mapsto h-C$,
whereas $S=-B$ transforms as $S\mapsto-h-S=h-1-S$. Thus
\begin{equation}\label{eq:reflection}
 (S_v,C_v)\longmapsto(h-1-S_v,\ h-C_v)
                  =(S_{\ell-v},C_{\ell-v}).
\end{equation}
Thus normalized models are identified precisely by $v\leftrightarrow\ell-v$.
The endpoints form one orbit and the interior parameters form
$\lfloor\ell/2\rfloor$ orbits. When $\ell$ is even, the middle model
$v=\ell/2$ is fixed by $\rho$. This proves the affine class count throughout
the full range, including $\ell=1,2$ and $p=2\ell+1$.
\end{proof}

\Needspace{10\baselineskip}
\begin{proposition}\label{prop:diff}
Assume \eqref{eq:range}. For $\ell\ge2$ and $1\le v\le\ell-1$,
put $m=\max\{v,\ell-v\}$. Then
\begin{equation}\label{eq:diff-sizes}
 \begin{aligned}
 |B_v-B_v|&=2\ell-1+2\min\{m-1,d\},\\
 |C_v-C_v|&=2\ell-1+2\min\{m-1,d-1\}.
 \end{aligned}
\end{equation}
For $v=0,\ell$, both difference sets instead have size $2\ell-1$.
Moreover, for fixed $p,\ell$ with $\ell\ge3$ and $p\ge4\ell-5$, two
boundary critical pairs are affinely equivalent if and only if
their smaller sets have difference sets of the same cardinality.
\end{proposition}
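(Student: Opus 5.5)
Since $B_v-B_v=S_v-S_v$ (negation preserves difference sets up to sign, and difference sets are symmetric), we compute directly with the interval models of \eqref{eq:normalform}.

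\emph{Endpoints.} For $v=0,\ell$, both $S_v$ and $C_v$ are intervals of length $\ell$ with $2\ell-1<p$. Hence both difference sets equal $[-(\ell-1),\ell-1]$, of size $2\ell-1$.

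\emph{Interior models.} Fix $1\le v\le\ell-1$ and write $S_v=P\cup Q$ with $P=[0,v-1]$ and $Q=[v+d,h-1]$. Then
\[
S_v-S_v=(P-P)\cup(Q-Q)\cup\pm(Q-P).
\]
The first two pieces give $[-(m-1),m-1]$. The cross differences are $Q-P=[v+d-(v-1),\,h-1]=[d+1,h-1]$, an interval of length $h-d-1=\ell-1$. Working with integer representatives, $S_v-S_v$ is the union of $[-(m-1),m-1]$, $[d+1,h-1]$ and $[-(h-1),-(d+1)]$, all inside $[-(h-1),h-1]$, which has $2h-1=p$ elements. So no wraparound collisions occur, and we may count inside $\mathbb Z$.
\begin{itemize}
\item If $m-1\ge d$, the pieces merge into $[-(h-1),h-1]$, of size $2(h-1)+1=2\ell-1+2d$. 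This uses $h-1=\ell-1+d$.
\item If $m-1<d$, the pieces are disjoint and the size is $(2m-1)+2(\ell-1)=2\ell-1+2(m-1)$. The constraint $m-1\le h-1$ is automatic.
\end{itemize}
Both cases give $2\ell-1+2\min\{m-1,d\}$.

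For $C_v=[1,v]\cup[v+d,h-1]$, the same decomposition applies. The within-block part is again $[-(m-1),m-1]$. The cross part is $[v+d-v,\,h-2]=[d,h-2]$, also of length $\ell-1$, and everything lies within $[-(h-2),h-2]$. Merging occurs when $m-1\ge d-1$, giving size $2(h-2)+1=2\ell-1+2(d-1)$; otherwise the size is $2\ell-1+2(m-1)$. This proves \eqref{eq:diff-sizes}.

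\emph{Classification by $|B-B|$.} Every critical pair is affinely equivalent to some normalized model, its smaller set being $B$. Affine maps preserve $|B-B|$, and by Corollary~\ref{cor:count} the classes are indexed by $v\in\{0,\dots,\lfloor\ell/2\rfloor\}$, so it suffices to show that the map $v\mapsto|B_v-B_v|$ is injective on this range. For $1\le v\le\lfloor\ell/2\rfloor$ we have $m=\ell-v$, so $m-1$ ranges over $\lceil\ell/2\rceil-1,\dots,\ell-2$, strictly decreasing in $v$.

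The hypothesis $p\ge4\ell-5$ gives $h\ge2\ell-2$, hence $d\ge\ell-2\ge m-1$. Therefore $\min\{m-1,d\}=m-1$, and the interior values $2\ell-1+2(m-1)$ are pairwise distinct. Each of them exceeds $2\ell-1$, since $m-1\ge\lceil\ell/2\rceil-1\ge1$ for $\ell\ge3$. So the interior values are also distinct from the endpoint value $2\ell-1$, which gives injectivity.

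The main obstacle is the no-wraparound check, namely that all integer differences lie in a window of size at most $p$. This is what makes the count in $\mathbb Z$ valid, and it relies on $S_v,C_v\subseteq[0,h-1]$.
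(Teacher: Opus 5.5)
Your proposal is correct and follows essentially the paper's route. You use the same split into the within-block window $[-(m-1),m-1]$ and the cross intervals $\pm[d+1,h-1]$ (respectively $\pm[d,h-2]$ for $C_v$), and the same injectivity argument via $d\ge\ell-2\ge m-1$. The only difference is bookkeeping: you count in centered representatives $[-(h-1),h-1]$ with a merge/disjoint case split, whereas the paper uses standard representatives and counts $W\setminus[d+1,p-d-1]$ directly.
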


\begin{proof}
Let $W=\{-(m-1),\ldots,m-1\}\subseteq\Fp$. The two blocks of $S_v$ have
lengths $v$ and $\ell-v$, so their within-block difference sets are the
centered windows of radii $v-1$ and $\ell-v-1$; their union is therefore
$W$, where $m=\max\{v,\ell-v\}$. The same observation applies to $C_v$.

We compute the cross differences using the following identity for
nonempty integer intervals, before reduction modulo $p$:
\[
 [a,b]-[c,e]=[a-e,b-c].
\]
All cross-difference intervals below are given in standard representatives.
For $S_v$, the positive differences from the second block to the first are
\[
 [v+d,h-1]-[0,v-1]=[d+1,h-1].
\]
Their negatives have standard representatives $[h,p-d-1]$.
For $C_v$, the positive cross differences are
\[
 [v+d,h-1]-[1,v]=[d,h-2],
\]
and their negatives are $[h+1,p-d]$. Thus
\begin{equation}\label{eq:diff-unions}
 \begin{aligned}
 S_v-S_v&=W\cup[d+1,p-d-1],\\
 C_v-C_v&=W\cup[d,h-2]\cup[h+1,p-d].
 \end{aligned}
\end{equation}
Each cross union has $2\ell-2$ elements.

It remains only to count the part of $W$ not already contained in the
cross differences. In standard representatives,
\[
 W=[0,m-1]\cup[p-m+1,p-1].
\]
Put $q=\min\{m-1,d\}$. Then, with the empty-interval convention,
\[
 W\setminus[d+1,p-d-1]=[0,q]\cup[p-q,p-1],
\]
so this complement has $1+2q$ elements. Therefore
\[
 |S_v-S_v|=(2\ell-2)+(1+2q)
 =2\ell-1+2\min\{m-1,d\}.
\]
For $C_v$, put $q'=\min\{m-1,d-1\}$. The central gap
$\{h-1,h\}$ between the two cross intervals does not meet $W$, because
$m\le\ell-1\le h-2$. Hence
\[
 W\setminus\bigl([d,h-2]\cup[h+1,p-d]\bigr)
   =[0,q']\cup[p-q',p-1],
\]
which has $1+2q'$ elements. This gives
\[
 |C_v-C_v|=2\ell-1+2\min\{m-1,d-1\}.
\]
Since negation preserves difference-set cardinality,
$|B_v-B_v|=|S_v-S_v|$, proving \eqref{eq:diff-sizes}.

For either endpoint, $S_v$ and $C_v$ are intervals of length $\ell$.
Their difference sets are the $2\ell-1$ consecutive residues
$-(\ell-1),\ldots,\ell-1$, which are distinct modulo $p$ because
$2\ell-1\le p$.

Affine transformations preserve difference-set cardinalities.
If $p\ge4\ell-5$, then $d\ge\ell-2\ge m-1$, and the interior formula becomes
$|B_v-B_v|=2\ell+2m-3$. For $\ell\ge3$ this is larger than the endpoint
value and recovers $m=(|B_v-B_v|-2\ell+3)/2$. The value of $m$ determines
$\{v,\ell-v\}$, hence exactly the affine class described in
Corollary~\ref{cor:count}. This proves the final assertion.
\end{proof}

\Needspace{8\baselineskip}
\section{Below the boundary: contrasting mechanisms}\label{sec:below}

The equality $|\Fp\setminus A|=|B|$ is essential to the boundary
argument. For $|A|>|B|\ge1$ and $|A|+|B|\le p$, put
$\delta=p-|A|-|B|$. If $\delta>0$, the complement has size
$|B|+\delta$, so the punctured-translate inclusions no longer determine it
by equality of cardinalities.
A critical pair has $\delta+2$ missing restricted sums. To distinguish
ordinary and diagonal losses, put
$\nu(A,B)=|(A+B)\setminus(A\RS B)|$; thus
\[
 p-|A+B|+\nu(A,B)=\delta+2.
\]
The examples below compare the roles of the defect $\delta$ and the size
gap $|A|-|B|$, beginning with a counterexample satisfying the
size-gap-three hypotheses.

\paragraph{A size-gap-three counterexample.}
Theorem~3.1 of version~1 of \cite{LL26} asserts that critical pairs with
$|B|\ge3$, $|A|\ge|B|+3$, and $|A|+|B|\le p-1$ consist of progressions
with a common difference, the smaller being an initial or terminal segment
of the larger. The following example satisfies every size hypothesis but
has neither set a progression.

\begin{example}\label{ex:13}
In $\mathbb F_{13}$, let
\[
 B=\{0,1,3,9\},\qquad A=\{0,1,3,7,8,9,11\}.
\]
The restricted sums are enumerated by fixing $b\in B$:
\[
\begin{array}{c|l}
 b & (A\setminus\{b\})+b\\ \hline
 0 & \{1,3,7,8,9,11\}\\
 1 & \{1,4,8,9,10,12\}\\
 3 & \{1,3,4,10,11,12\}\\
 9 & \{3,4,7,9,10,12\}.
\end{array}
\]
Taking the union gives
\[
 A\RS B=\mathbb F_{13}\setminus\{0,2,5,6\},\qquad
 |A\RS B|=9=|A|+|B|-2.
\]
Neither set is an arithmetic progression. Indeed, direct counting gives
\[
\begin{array}{c|rrrrrr}
 r&1&2&3&4&5&6\\ \hline
 |A\cap(A+r)|&3&4&3&3&4&4\\
 |B\cap(B+r)|&1&1&1&1&1&1
\end{array}
\]
and the counts for $-r$ are the same. An $m$-term progression of
nonzero difference $r$ has exactly $m-1$ elements in common with its
translate by $r$, whereas the counts above are less than $6$ for $A$
and less than $3$ for $B$.
Since $|A|=7=|B|+3$ and $|A|+|B|=11<12=p-1$, this critical pair
contradicts \cite[Theorem~3.1, version~1]{LL26}.

The second row of counts also shows that $B$ is a cyclic $(13,4,1)$
difference set. All four missing restricted sums reappear as
$2b=b+b$, $b\in B\subseteq A$, since $2\cdot B=\{0,2,5,6\}$.
Thus $A+B=\mathbb F_{13}$, $\nu(A,B)=4$, and $\delta=2$.
\end{example}

\paragraph{Comparison with small-gap families.}
The one- and two-deletion patterns below occur in Liu and Qian
\cite[Lemma~2.4(i), Proposition~3.2(i)]{LQ26}; the latter proposition is
stated for $\ell\ge4$. We record explicit sumsets for $\ell\ge3$ and
organize these constructions by the defect $\delta$.

\Needspace{10\baselineskip}
\begin{proposition}\label{prop:holes}
Let $\ell\ge3$.
\begin{enumerate}
\renewcommand{\labelenumi}{(\roman{enumi})}
\setlength{\itemsep}{2pt}
\item If $p\ge2\ell+1$ and $1\le j\le\ell-1$, set
$A=[0,\ell]$, $B=A\setminus\{j\}$. Then
\[
 A\RS B=[1,2\ell-1],\qquad A+B=[0,2\ell].
\]
This is a critical pair with size gap one and even defect
$\delta=p-2\ell-1$.
\item If $p\ge2\ell+3$, set
$A=[0,\ell+1]$, $B=[0,\ell]\setminus\{\ell-1\}$. Then
\[
 A\RS B=[1,2\ell-1]\cup\{2\ell+1\},\qquad A+B=[0,2\ell+1].
\]
This is a critical pair with size gap two and odd defect
$\delta=p-2\ell-2$.
\end{enumerate}
In both cases $A$ is a progression, $B$ is not a progression, and
$(A+B)\setminus(A\RS B)=\{0,2\ell\}$, so $\nu(A,B)=2$.
Consequently, every fixed defect $\delta\ge0$ admits such mixed critical
pairs over every sufficiently large prime field.
\end{proposition}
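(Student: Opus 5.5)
The plan is a direct computation with integer intervals. Since $p\ge 2\ell+1$ in (i) and $p\ge 2\ell+3$ in (ii), every sum lies in $[0,2\ell+1]\subseteq[0,p-1]$. Hence no reduction modulo $p$ occurs, and all sumset identities can be verified in $\mathbb Z$ and then transferred to $\Fp$.

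For (i), $A+B\subseteq[0,2\ell]$. Since $|B|=\ell\ge3$, the set $B$ contains both $0$ and $\ell$, so $A+B\supseteq[0,\ell]+\{0,\ell\}=[0,2\ell]$. For the restricted sumset, $0=0+0$ and $2\ell=\ell+\ell$ are the unique representations, both diagonal, so they are lost. Every $t\in[1,2\ell-1]$ needs a representation $a+b$ with $a\ne b$:
\begin{itemize}
\item for $1\le t\le\ell$, take $t+0$ with $t\ne0$;
\item for $\ell+1\le t\le 2\ell-1$, take $(t-\ell)+\ell$, where $t-\ell\in[1,\ell-1]$ differs from $\ell$.
\end{itemize}
Thus $A\RS B=[1,2\ell-1]$, which has size $2\ell-1=|A|+|B|-2$, so the pair is critical.

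For (ii), $B$ contains $0$ and $\ell$, so $A+B\supseteq[0,\ell+1]+\{0,\ell\}=[0,2\ell+1]$. For the restricted sumset, $0$ is lost. The sum $2\ell$ has only the representations $\ell+\ell$ and $(\ell+1)+(\ell-1)$, and $\ell-1\notin B$, so $2\ell$ is lost too. Every other $t$ is obtained as follows:
\begin{itemize}
\item for $1\le t\le\ell+1$, take $t+0$;
\item for $\ell+2\le t\le2\ell-1$, take $(t-\ell)+\ell$;
\item for $t=2\ell+1$, take $(\ell+1)+\ell$.
\end{itemize}
Hence $|A\RS B|=2\ell=|A|+|B|-2$, so the pair is critical. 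In both cases $(A+B)\setminus(A\RS B)=\{0,2\ell\}$, which gives $\nu(A,B)=2$. The values of $\delta$ and of the size gap follow from $|A|$ and $|B|$.

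For the non-progression claim, $B\subseteq[0,\ell]\subseteq[0,(p-1)/2]$ has $|B|\ge3$ and contains two consecutive integers, namely $0,1$ or $\ell-1,\ell$. If $B$ were a progression, Lemma~\ref{lem:lifting} would make it an integer interval, but $B$ has a gap. The one delicate case is (ii) with $\ell=3$, where $B=\{0,1,3\}$ still contains $0,1$, so the lemma applies. The set $A$ is an interval, hence a progression.

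Finally, given $\delta\ge0$, use (i) when $\delta$ is even and (ii) when $\delta$ is odd. For any prime $p\ge\delta+9$, put $\ell=(p-\delta-1)/2$ or $(p-\delta-2)/2$ accordingly; this is an integer because $p$ is odd and $\delta$ has the matching parity. The bound $p\ge\delta+9$ gives $\ell\ge3$, and then $p\ge2\ell+1$ or $p\ge2\ell+3$ holds automatically. I expect no real obstacle; the only care needed is exhausting all representations of $2\ell$ in (ii) and handling the small case $\ell=3$.
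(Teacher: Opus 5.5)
Your proposal is correct and follows essentially the same route as the paper: non-diagonal representations using $b\in\{0,\ell\}$, the check that $0$ and $2\ell$ have only diagonal (or $B$-excluded) representations, Lemma~\ref{lem:lifting} for the non-progression of $B$, and a parity-matched choice of $\ell$ for the final claim (your uniform bound $p\ge\delta+9$ is only slightly cruder than the paper's $\delta+7$ and $\delta+8$). Two small wording fixes: in (i), $0,\ell\in B$ holds because $1\le j\le\ell-1$, not because $|B|\ge3$; and in (i) with $p=2\ell+1$, the sums lie in $[0,2\ell]\subseteq[0,p-1]$, whereas $[0,2\ell+1]$ does not fit inside $[0,p-1]$.
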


\begin{proof}
All integer sums displayed below are less than $p$, so there is no modular
wrap-around. In (i), use $z=z+0$ for $1\le z\le\ell$ and
$z=(z-\ell)+\ell$ for $\ell+1\le z\le2\ell-1$.
The endpoints $0,\ell$ belong to $B$, so these representations are
admissible regardless of $j$. The sums $0,2\ell$ have only their diagonal
representations. These facts give both asserted sumsets.

In (ii), the sums with $b=0$ cover $[1,\ell+1]$, and those with $b=\ell$
cover $[\ell+1,2\ell-1]\cup\{2\ell+1\}$ non-diagonally.
The only candidates for $2\ell$ are $\ell+\ell$ and
$(\ell+1)+(\ell-1)$; the first is diagonal and the second uses a point
absent from $B$. The sum $0$ is also only diagonal.
The ordinary sums with $b=0,\ell$ fill $[0,2\ell+1]$.

In either case $B\subseteq[0,(p-1)/2]$, contains consecutive integers,
and is not an integer interval. Lemma~\ref{lem:lifting} therefore shows
that $B$ is not a progression.
For any prescribed even $\delta\ge0$, choose an odd prime
$p\ge\delta+7$ and put $\ell=(p-\delta-1)/2$ in (i).
For odd $\delta\ge1$, use any odd prime $p\ge\delta+8$ and
$\ell=(p-\delta-2)/2$ in (ii).
\end{proof}

At $\delta=0$, part~(i) recovers the interior deletion construction of
Proposition~\ref{prop:arithmetic}(ii), up to negation; positive $\delta$
extends it below the boundary. Both families have size gap less than three.

\Needspace{22\baselineskip}
\begin{example}\label{ex:small}
Let $p\ge7$ and $a,b\in\Fp^\times$ with $a\ne b$ and $a\ne-b$.
Set $A=\{0,a,b,a+b\}$ and delete any one element to obtain $B$.
Then
\[
 A\RS B=\{a,b,a+b,2a+b,a+2b\},\qquad |A\RS B|=5.
\]
Indeed, the six unordered pairs of distinct elements of $A$ give precisely
these sums, with $a+b$ occurring twice. Deleting one vertex leaves at least
one endpoint of each such pair in $B$, so no restricted sum is lost.
Any difference of two of the five displayed values lies in
$\{\pm a,\pm b,\pm(a-b),\pm(a+b),\pm2a,\pm2b\}$ and is nonzero.
Thus the pair is critical with size gap one and defect $\delta=p-7$.
The example $A=\{0,1,3,4\}$, $B=\{0,1,3\}$ in $\mathbb F_{11}$ is
already given in \cite[Remark~3.5]{LQ26}. More generally, for every $p\ge11$,
the same choice
\[
 A=\{0,1,3,4\},\qquad B=\{0,1,3\}
\]
has neither set a progression by Lemma~\ref{lem:lifting}: both lie in
$[0,(p-1)/2]$, contain consecutive integers, and are not intervals.
Here $A\RS B=\{1,3,4,5,7\}$ and
$A+B=[0,7]$, so $\nu(A,B)=3$.
\end{example}

At the boundary, normalization yields the parameter $v$, with affine
equivalence given by $v\leftrightarrow\ell-v$. Below it, ordinary and
diagonal losses vary. Neither set need be a progression: the four-point
examples have size gap one, whereas Example~\ref{ex:13} meets the
size-gap-three hypotheses. We make no uniqueness assertion for the
$\mathbb F_{13}$ example and no classification below the boundary.

\end{document}